\numberwithin{equation}{section}
\newtheorem*{Problem*}{Open Problem}
\begin{document}

\renewcommand{\thefootnote}{}

\renewcommand{\PaperNumber}{005}

\FirstPageHeading

\ShortArticleName{Solution of an Open Problem about Two Families of Orthogonal Polynomials}

\ArticleName{Solution of an Open Problem \\ about Two Families of Orthogonal Polynomials\footnote{This paper is a~contribution to the Special Issue on Orthogonal Polynomials, Special Functions and Applications (OPSFA14). The full collection is available at \href{https://www.emis.de/journals/SIGMA/OPSFA2017.html}{https://www.emis.de/journals/SIGMA/OPSFA2017.html}}}

\Author{Walter VAN ASSCHE}

\AuthorNameForHeading{W.~Van Assche}

\Address{Department of Mathematics, KU Leuven, Belgium}
\Email{\href{mailto:walter.vanassche@kuleuven.be}{walter.vanassche@kuleuven.be}}
\URLaddress{\url{https://wis.kuleuven.be/analyse/members/walter}}

\ArticleDates{Received January 18, 2019; Published online January 27, 2019}

\Abstract{An open problem about two new families of orthogonal polynomials was posed by Alhaidari. Here we will identify one of them as Wilson polynomials. The other family seems to be new but we show that they are discrete orthogonal polynomials on a bounded countable set with one accumulation point at $0$ and we give some asymptotics as the degree tends to infinity.}

\Keywords{orthogonal polynomials; special functions; open problems}

\Classification{42C05; 33A45}

\renewcommand{\thefootnote}{\arabic{footnote}}
\setcounter{footnote}{0}

\section{Introduction}
Abdulaziz D.~Alhaidari \cite{alhaidari1} submitted the following open problem for the proceedings of the OPSFA-14 conference.

Using an algebraic method for solving the wave equation in quantum mechanics, Alhaidari and co-authors \cite{alhaid3, alhaid1,alhaid2,alhaid4,alhaid5} encountered two families of orthogonal polynomials on the real line. These polynomials are defined by their three-term recurrence relations and initial values. The weight functions (orthogonality measures), generating functions, orthogonality relations, etc.\ are yet to be derived analytically. See~\cite{alhaidari2} for more physical background.

The first family is a four parameter family of orthogonal polynomials $H_n^{(\mu,\nu)}(z;\alpha,\theta)$ given by the recursion
\begin{gather}
 \cos \theta H_n^{(\mu,\nu)}(z;\alpha,\theta) \nonumber \\
\qquad{} = \left( z \sin \theta \left[ \left(n+ \frac{\mu+\nu+1}{2}\right)^2 + \alpha \right]
 + \frac{\nu^2-\mu^2}{(2n+\mu+\nu)(2n+\mu+\nu+2)} \right) H_n^{(\mu,\nu)}(z;\alpha,\theta) \nonumber \\
\qquad\quad{} + \frac{2(n+\mu)(n+\nu)}{(2n+\mu+\nu)(2n+\mu+\nu+1)} H_{n-1}^{(\mu,\nu)}(z;\alpha,\theta) \nonumber \\
\qquad\quad{} + \frac{2(n+1)(n+\mu+\nu+1)}{(2n+\mu+\nu+1)(2n+\mu+\nu+2)} H_{n+1}^{(\mu,\nu)}(z;\alpha,\theta),\label{firstfam}
\end{gather}
with $0 \leq \theta \leq \pi$, $\mu,\nu > -1$ and $\alpha \in \mathbb{R}$. The initial values are $H_0^{(\mu,\nu)}(z;\alpha,\theta)=1$ and $H_{-1}^{(\mu,\nu)}(z;\alpha,\theta)=0$.

The second family is a three-parameter family of orthogonal polynomials $G_n^{(\mu,\nu)}(z;\sigma)$ which satisfies the recurrence
\begin{gather}\allowdisplaybreaks
 z G_n^{(\mu,\nu)}(z;\sigma) = \left( \big(\sigma+B_n^2\big) \left[ \frac{\mu^2-\nu^2}{(2n+\mu+\nu)(2n+\mu+\nu+2)} + 1 \right] \right.\nonumber\\
 \left. \hphantom{z G_n^{(\mu,\nu)}(z;\sigma) =}{} - \frac{2n(n+\nu)}{2n+\mu+\nu} - \frac{(\mu+1)^2}{2} \right) G_n^{(\mu,\nu)}(z;\sigma) \nonumber \\
\hphantom{z G_n^{(\mu,\nu)}(z;\sigma) =}{}- \big(\sigma+B_{n-1}^2 \big)\frac{2(n+\mu)(n+\nu)}{(2n+\mu+\nu)(2n+\mu+\nu+1)} G_{n-1}^{(\mu,\nu)}(z;\sigma) \nonumber \\
\hphantom{z G_n^{(\mu,\nu)}(z;\sigma) =}{}- \big(\sigma + B_n^2\big) \frac{2(n+1)(n+\mu+\nu+1)}{(2n+\mu+\nu+1)(2n+\mu+\nu+2)} G_{n+1}^{(\mu,\nu)}(z;\sigma),\label{secondfam}
\end{gather}
with $B_n= n+1 + \frac{\mu+\nu}{2}$, $\mu,\nu >-1$ and $\sigma \in \mathbb{R}$ and initial values $G_0^{(\mu,\nu)}(z;\sigma)=1$, $G_{-1}^{(\mu,\nu)}(z;\sigma)=0$.

\begin{Problem*} Due to the significance of the two families $H_n^{(\mu,\nu)}(z;\alpha,\theta)$ and $G_n^{(\mu,\nu)}(z;\sigma)$, we hope that experts in the field of orthogonal polynomials could study them, derive their analytic properties and write them in closed form $($e.g., in terms of hypergeometric functions$)$. The required properties for these polynomials include the weight functions, generating functions, asymptotics, orthogonality relations, Rodrigues-type formulas, forward/backward shift operators, zeros, etc.
\end{Problem*}

In order to identify the two families, it is convenient to switch to the monic polynomials and to look at the recurrence relation for the monic polynomials. One can then try to identify them with known families of orthogonal polynomials, e.g., by using the table of recurrence formulas in Chihara's book \cite{chihara}, or by using the information of hypergeometric orthogonal polynomials in the book of Koekoek, Lesky and Swarttouw~\cite{koekoek}, going through Chapter~18 on Orthogonal Polynomials in the Digital Library of Mathematical Function~\cite{refNIST, refDLMF}, or by comparing with the information available in CAOP\footnote{\url{http://www.caop.org/}} (Computer Algebra \& Orthogonal Polynomials). One can also try to use computer algebra to identify the polynomials from their recurrence relations, such as \texttt{rec2ortho}\footnote{\url{https://staff.fnwi.uva.nl/t.h.koornwinder/art/software/rec2ortho/}} of Koornwinder and Swarttouw or \texttt{retode}\footnote{\url{http://www.mathematik.uni-kassel.de/~koepf/Publikationen/\#down}} of Koepf and Schmersau~\cite{koepf}. These programs were designed for older versions of maple and do not handle all orthogonal polynomials in the Askey table. Recently Tcheutia \cite{tcheutia} extended the Maple implementation of \texttt{retode} to cover classical orthogonal polynomials on quadratic and $q$-quadratic lattices and was able to identify the second family \eqref{secondfam} as Wilson polynomials, confirming the analysis in Section \ref{sec3}.

\section{The first family of orthogonal polynomials} \label{sec2}

For the first family of orthogonal polynomials, the monic polynomials are given by $P_n(z) = H_n^{(\mu,\nu)}(z;\alpha,\theta)/k_n$, where
\begin{gather*} \frac{k_{n+1}}{k_n} = - \frac{\sin \theta \bigl[ \big(n+ \frac{\mu+\nu+1}{2}\big)^2+\alpha \bigr] (2n+\mu+\nu+1)(2n+\mu+\nu+2)}
 {2(n+1)(n+\mu+\nu+1)}. \end{gather*}
The recurrence relation then becomes
\begin{gather*} zP_n(z) = P_{n+1}(z) + b_n P_n(z) + a_n^2 P_{n-1}(z), \end{gather*}
with recurrence coefficients
\begin{gather*} a_n^2 = \frac{4n(n+\mu)(n+\nu)(n+\mu+\nu)}{\sin^2\theta \bigl[ \big(n\!+\! \frac{\mu{+}\nu{+}1}{2}\big)^2
\!+\!\alpha \bigr] \bigl[ \big(n\!+\! \frac{\mu{+}\nu{-}1}{2}\big)^2\!+\!\alpha \bigr] (2n\!+\!\mu\!+\!\nu\!+\!1)(2n\!+\!\mu\!+\!\nu)^2(2n\!+\!\mu\!+\!\nu\!-\!1)} \end{gather*}
and
\begin{gather*} b_n = \frac{1}{\sin \theta \bigl[ \big(n+ \frac{\mu+\nu+1}{2}\big)^2+\alpha \bigr]}
 \left( \cos \theta + \frac{\mu^2-\nu^2}{(2n+\mu+\nu)(2n+\mu+\nu+2)} \right). \end{gather*}
From this we can already deduce that for $\theta=\pi/2$
\begin{gather*} \lim_{\alpha \to \infty} \alpha^n P_n(z/\alpha) = P_n^{(\mu,\nu)}(z), \end{gather*}
where $P_n^{(\mu,\nu)}$ are the monic Jacobi polynomials. This follows by taking the limit for $\alpha \to \infty$ in the recurrence coefficients which, after appropriate scaling, gives the recurrence coefficients of the Jacobi polynomials.
Another useful observation is that
\begin{gather*} \lim_{n \to \infty} a_n^2 = 0, \qquad \lim_{n \to \infty} b_n = 0, \end{gather*}
and in fact $a_n^2 = \mathcal{O}\big(1/n^4\big)$ and $b_n = \mathcal{O}\big(1/n^2\big)$. This implies that the Jacobi matrix
\begin{gather*} J = \begin{pmatrix} b_0 & a_1 & 0 & 0 & 0 & \cdots \\
 a_1 & b_1 & a_2 & 0 & 0 & \cdots \\
 0 & a_2 & b_2 & a_3 & 0 & \cdots \\
 0 & 0 & a_3 & b_3 & a_4 & \\
 \vdots & \vdots & & \ddots & \ddots & \ddots
 \end{pmatrix} \end{gather*}
is a compact operator, and in fact it is a trace class operator. This implies that the spectrum of~$J$ is a countable set $\{x_k, k \in \mathbb{N}\}$ with one accumulation point at $0$, hence $\lim\limits_{k \to \infty} x_k = 0$. The trace class condition implies that $\sum\limits_{k=0}^\infty |x_k|$ is finite. Consequently the orthogonality measure for the first family is a discrete measure supported on this countable set. See~\cite{wva} for more information on compact operators and orthogonal polynomials.

The asymptotic behavior is given by
\begin{theorem} \label{thm:Pasy}
There exists an entire function $Q$ such that
\begin{gather} \label{Pasy}
 \lim_{n \to \infty} z^n P_n(1/z) = Q(z),
\end{gather}
uniformly on compact subsets of $\mathbb{C}$. The function $Q$ has infinitely many zeros at the points $\{1/x_k, k \in \mathbb{N} \}$.
\end{theorem}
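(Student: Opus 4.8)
\medskip
\noindent\textbf{Proof sketch.}
The plan is to recognise the reversed polynomial $R_n(z):=z^nP_n(1/z)$ as a Fredholm determinant of a finite section of the Jacobi operator $J$, and then to invoke the continuity of Fredholm determinants with respect to the trace norm. The starting point is the classical fact that the monic orthogonal polynomial $P_n$ is the characteristic polynomial of the $n\times n$ leading principal submatrix $J_n$ of $J$, i.e.\ $P_n(w)=\det(wI-J_n)$; this follows by expanding the tridiagonal determinant along the last row and column and comparing the resulting two-term recursion with $P_{n+1}=(z-b_n)P_n-a_n^2P_{n-1}$ and the initial values. Substituting $w=1/z$, multiplying by $z^n$, and using $\det\bigl((1/z)I-J_n\bigr)=z^{-n}\det(I-zJ_n)$ gives
\begin{gather*}
 R_n(z)=z^nP_n(1/z)=z^n\det\bigl((1/z)I-J_n\bigr)=\det(I-zJ_n).
\end{gather*}

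Next I would use that $J$ is trace class (as already noted, since $a_n^2=\mathcal{O}(1/n^4)$ and $b_n=\mathcal{O}(1/n^2)$). Regarding $J_n$ as an operator on $\ell^2$ via the embedding $J_n\oplus 0$ and writing $\Pi_n$ for the projection onto the first $n$ coordinates, one has $J-J_n=(I-\Pi_n)J+\Pi_nJ(I-\Pi_n)$, so $\|J-J_n\|_1\to 0$. Since $z\mapsto\det(I-zJ)$ is entire and the map $A\mapsto\det(I-A)$ is Lipschitz on bounded subsets of the trace class, with $|\det(I-A)-\det(I-B)|\le\|A-B\|_1\exp(\|A\|_1+\|B\|_1+1)$, and since $\|zJ_n-zJ\|_1=|z|\,\|J_n-J\|_1\to 0$ uniformly for $z$ in any bounded set, we conclude that $R_n(z)\to\det(I-zJ)=:Q(z)$ uniformly on compact subsets of $\mathbb{C}$, with $Q$ entire. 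This proves \eqref{Pasy}; note also $Q(0)=\det I=1$, so $Q\not\equiv 0$.

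For the zeros, since $J$ is a trace class self-adjoint operator with eigenvalues $\{x_k\}$ (its spectrum, accumulating only at $0$), the Fredholm determinant factors as $Q(z)=\det(I-zJ)=\prod_k(1-x_kz)$. Hence $Q(1/x_k)=0$ for every $k$ with $x_k\neq 0$; as there are infinitely many such eigenvalues (they form an infinite sequence tending to $0$), $Q$ has infinitely many zeros, located at the points $1/x_k$, which tend to $\infty$ — consistent with $Q$ being entire. In fact this argument identifies these as the only zeros of $Q$.

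I do not expect a serious obstacle here: the only points requiring care are the trace-norm convergence $J_n\to J$ and the continuity of the Fredholm determinant, both standard. If one prefers an operator-free route, the reversed recurrence $R_{n+1}(z)=(1-b_nz)R_n(z)-a_n^2z^2R_{n-1}(z)$ (with $R_0\equiv 1$, $R_{-1}\equiv 0$) yields the a priori bound $\max_{|z|\le M}|R_n(z)|\le\exp\bigl(M\sum_k|b_k|+M^2\sum_ka_k^2\bigr)$, whence $\sum_n\|R_{n+1}-R_n\|$ converges uniformly on compacts and $R_n$ converges to an entire function $Q$ with $Q(0)=1$; identifying $1/x_k$ as a zero then requires combining Hurwitz's theorem with the fact that each mass point $x_k$ of the orthogonality measure is a limit of zeros of $P_n$.
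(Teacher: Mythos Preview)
Your argument is correct, and it differs from the paper's proof. The paper proceeds exactly along what you call the ``operator-free route'': it sets $Q_n(z)=z^nP_n(1/z)$, rewrites the recurrence as $Q_{n+1}-Q_n=-b_nzQ_n-a_n^2z^2Q_{n-1}$, telescopes to obtain $Q_n(z)=1-\sum_{k=0}^{n-1}b_kzQ_k(z)-\sum_{k=0}^{n-2}a_{k+1}^2z^2Q_k(z)$, applies a discrete Gronwall inequality to get the uniform bound $|Q_n(z)|\le\exp\bigl(R\sum_k|b_k|+R^2\sum_ka_{k+1}^2\bigr)$ on $\{|z|\le R\}$, and then passes to the limit by dominated convergence; the zeros of $Q$ are handled only briefly, as limits (implicitly via Hurwitz) of the points $1/x_{n,k}$ with $x_{n,k}$ the zeros of $P_n$. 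Your Fredholm-determinant route is more conceptual and in fact delivers more: it identifies $Q(z)$ explicitly as $\det(I-zJ)=\prod_k(1-x_kz)$, so the zero set is exactly $\{1/x_k\}$ with the correct multiplicities, whereas the paper leaves $Q$ in the less explicit form $1-\sum_{k\ge0}(b_kz+a_{k+1}^2z^2)Q_k(z)$. The cost is the appeal to trace-ideal machinery (trace-norm convergence $J_n\to J$ and Lipschitz continuity of $\det(I-\cdot)$ on bounded sets of the trace class), while the paper's argument is elementary and self-contained.
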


\begin{proof}
We introduce the reversed polynomials $Q_n(z) = z^n P_n(1/z)$. We have $Q_n(0)=1$ since the $P_n$ are monic polynomials.
The three-term recurrence relation then becomes
\begin{gather} \label{Qrec}
 Q_n(z) = Q_{n+1}(z) + b_n z Q_n(z) + a_n^2 z^2 Q_{n-1}(z),
\end{gather}
from which we easily find
\begin{gather*} Q_{k+1}(z) - Q_k(z) = -b_kz Q_k(z) - a_k^2 z^2 Q_{k-1}(z). \end{gather*}
Summing from $k=0$ to $n-1$ then gives
\begin{gather}
 Q_n(z) = Q_0(z) - \sum_{k=0}^{n-1} b_k zQ_k(z) - \sum_{k=1}^{n-1} a_k^2 z^2 Q_{k-1}(z) \nonumber \\
 \hphantom{Q_n(z)}{} = 1 - \sum_{k=0}^{n-1} b_k z Q_k(z) - \sum_{k=0}^{n-2} a_{k+1}^2 z^2 Q_k(z).\label{Q}
\end{gather}
Let $K$ be a compact set in $\mathbb{C}$, then there exists an $R >0$ such that $|z| \leq R$ for $z \in K$ and from~\eqref{Q} we find
\begin{gather*} |Q_n(z)| \leq 1 + \sum_{k=0}^{n-1} \big(|b_k| R + a_{k+1}^2 R^2\big) |Q_k(z)|. \end{gather*}
We can then use the discrete version of Gronwall's inequality (see \cite[p.~440]{wvaNATO}) to find that
\begin{gather*} |Q_n(z)| \leq \exp \left( \sum_{k=0}^{n-1} \big(|b_k|R+a_{k+1}^2 R^2\big) \right), \end{gather*}
so that uniformly for $z \in K$
\begin{gather*} |Q_n(z)| \leq M = \exp \left( R \sum_{k=0}^\infty |b_k| + R^2 \sum_{k=0}^\infty a_{k+1}^2 \right). \end{gather*}
We can then use Lebesgue's dominated convergence theorem and take the limit $n \to \infty$ in \eqref{Q} to find
\begin{gather} \label{Qasy}
 \lim_{n \to \infty} Q_n(z) = 1 - \sum_{k=0}^\infty \big(b_kz+a_{k+1}^2 z^2\big) Q_k(z) := Q(z) ,
\end{gather}
and the sum converges uniformly for $z \in K$. The limit function $Q$ is therefore an entire function, and its zeros are limits of the zeros of $Q_n$, which in turn are $\{ 1/x_{n,k}, 1 \leq k \leq n\}$, where $x_{n,k}$ are the zeros of $P_n$.
\end{proof}

Theorem \ref{thm:Pasy} gives the existence of an entire function $Q$ for which \eqref{Pasy} holds. The formula~\eqref{Qasy} is not very convenient to describe the limit function, since it is in terms of the polynomials $Q_k$ that we are investigating. In order to find more information on $Q$, we can write
\begin{gather*} Q_n(z) = \sum_{k=0}^n c_{n,k} z^k, \qquad Q(z) = \sum_{k=0}^\infty c_k z^k, \end{gather*}
and then $c_{n,0}=1=c_0$ and
\begin{gather*} c_k = \lim_{n \to \infty} c_{n,k}. \end{gather*}
From the recurrence relation \eqref{Qrec} we find
\begin{gather*} c_{n,k} = c_{n+1,k} + b_n c_{n,k-1} + a_n^2 c_{n-1,k-2}, \end{gather*}
which, in principle, allows to compute the coefficients $c_{n,k}$ recursively. The first few coefficients are
\begin{gather*} c_{n,1} = - \sum_{k=0}^{n-1} b_k, \quad c_{n,2} = \sum_{k=1}^{n-1} \sum_{j=0}^{k-1} b_kb_j - \sum_{k=1}^{n-1} a_k^2, \end{gather*}
from which we find
\begin{gather*} c_1 = - \sum_{k=0}^\infty b_k, \quad c_2 = \sum_{k=1}^\infty \sum_{j=0}^{k-1} b_kb_j - \sum_{k=1}^\infty a_k^2 . \end{gather*}
These sums are all absolutely convergent. Their explicit value depends on the parameters $\mu$, $\nu$, $\alpha$, $\theta$
and involves the expression
\begin{gather*} \psi \left( \frac{\mu+\nu+1}{2} - \sqrt{-\alpha} \right) - \psi \left( \frac{\mu+\nu+1}{2} + \sqrt{-\alpha} \right), \end{gather*}
where $\psi(z) = \Gamma'(z)/\Gamma(z)$ is the Psi function \cite[Section~5.2]{refNIST, refDLMF}, so that a distinction between the cases $\alpha>0$ and $\alpha < 0$ may be needed. No further analysis of the function $Q$ has been done.

\section{The second family of orthogonal polynomials} \label{sec3}

For the second family the monic polynomials are $P_n(z) = G_n^{(\mu,\nu)}(z;\sigma)/k_n$, where
\begin{gather*} \frac{k_n}{k_{n+1}} = -\big(\sigma+B_n^2\big) \frac{2(n+1)(n+\mu+\nu+1)}{(2n+\mu+\nu+1)(2n+\mu+\nu+2)}. \end{gather*}
The recurrence coefficients for the monic polynomials are then given by
\begin{gather*} a_n^2 = \big(\sigma+B_{n-1}^2\big)^2 \frac{4n(n+\mu)(n+\nu)(n+\nu+\mu)}{(2n+\mu+\nu-1)(2n+\mu+\nu)^2(2n+\mu+\nu+1)}, \end{gather*}
and
\begin{gather*} b_n = \big(\sigma+B_n^2\big) \left( \frac{\mu^2-\nu^2}{(2n+\mu+\nu)(2n+\mu+\nu+2)} + 1 \right) - \frac{2n(n+\nu)}{2n+\mu+\nu} - \tfrac12 (\mu+1)^2 . \end{gather*}
Again there is a limit transition to Jacobi polynomials:
\begin{gather*} \lim_{\sigma \to \infty} \sigma^{-n} P_n(\sigma z) = P_n^{(\mu,\nu)}(z-1), \end{gather*}
where $P_n^{(\mu,\nu)}$ are the monic Jacobi polynomials.
The recurrence coefficients have the asymptotic behavior
\begin{gather*} a_n^2 = \tfrac14{n^4} + \mathcal{O}\big(n^3\big), \qquad b_n = n^2 + \mathcal{O}(n), \end{gather*}
so that they are unbounded. The spectrum of the Jacobi matrix (and the support of the orthogonality measure for the polynomials) is therefore
unbounded. It was noted by Yutian Li \cite{Li} that the recurrence coefficients correspond to a special case of the Wilson polynomials
$W_n(x;a,b,c,d)$, which are on top of the Askey table \cite[Section~9.1]{koekoek}. The identification is
\begin{gather*} G_n^{(\mu;\nu)}(z,\sigma) = \frac{W_n(z/2;a,b,c,d)}{(a+b)_n(a+d)_n}, \end{gather*}
where the parameters $a$, $b$, $c$, $d$ are given by
\begin{gather*} a=\tfrac12{(\mu+1)} = c, \qquad b= \tfrac12{(\mu+1)} +s, \qquad d = \tfrac12{(\mu+1)} -s, \end{gather*}
with $s=\sqrt{-\sigma}$.

\subsection*{Acknowledgements}
%The author
WVA is supported by EOS project PRIMA 30889451 and FWO research project G.086416N.

\pdfbookmark[1]{References}{ref}
\LastPageEnding

\end{document}